\numberwithin{equation}{section}
\theoremstyle{plain}
\newtheorem{theorem}{Theorem}
\newtheorem*{theorem*}{Theorem}
\newtheorem{proposition}[theorem]{Proposition}
\newtheorem*{conjecture*}{Conjecture}
\newtheorem*{lemma*}{Lemma}
\theoremstyle{definition}
\newtheorem{definition}{Definition}
\newtheorem*{definition*}{Definition}
\theoremstyle{remark}
\newtheorem{remark}{Remark}
\begin{document}
\date{}
\title{A Study of Weakly Discontinuous Solutions for Hyperbolic Differential Equations Based on Wavelet Transform Methods}
\title[A Study of Weakly Discontinuous Solutions]{A Study of Weakly Discontinuous Solutions for Hyperbolic Differential Equations Based on Wavelet Transform Methods}
\author{Shijie Gu}

\address{Department of Mathematics \& Statistics\\
University of Nevada, Reno\\
1664 N. Virginia Street
Reno, NV, USA}

\email{sgu@unr.edu}

\thanks{2010 Mathematics Subject Classification: Primary 65T60, 49K20; Secondary 65M99}

\begin{abstract}
A new approach to prove the one-dimensional Cauchy problem’s weakly discontinuous solutions for hyperbolic PDEs are on the characteristics is discussed in this paper. To do so, I use wavelet singularity detection methods or WTMM [6] based on two-dimensional wavelet transform and combine it with the Lipschitz index to strengthen the detection.
\end{abstract}

\maketitle


\newcommand\sfrac[2]{{#1/#2}}

\newcommand\cont{\operatorname{cont}}
\newcommand\diff{\operatorname{diff}}

Wavelet singularity detection theories have been applied in many fields [5], such as signal and imaging process, especially wavelet denoise methods. Multiresolution analysis is very useful for identifying peaks and valleys of noisy signals. The key point of wavelet-based detection is wavelet transform. By using wavelet transform, the "peaks" or "cusps" are able to be smoothed, and appear to be local maxima and zeros (inflections) by using first derivative, second derivative or even higher derivatives wavelet transform convolution (this method is called wavelet transform modulus maxima or WTMM [6]). 

In this paper, we introduced a theoretical application of WTMM. During the process of application, we successfully found out that WTMM is a good tool for the study of discontinuous and weakly discontinuous problems in PDEs. In other words, one can use the property of WTMM to detect the positions of weakly discontinous solutions. This brand new approach will be introduced in section 3, and the orientation of WTMM is found by Proposition 3. Hopefully this research can shed light on the studies of discontinuous problems in PDEs. In section 2, the notations in two-dimensional wavelet transform are introduced; in section 4, we demonstrate a way to distinguish the "real" jump discontinuity and "fake" one. 

Note that weakly discontinuity is the specific property of hyperbolic PDEs; that means parabolic and elliptical PDEs do not have such property. So the topic of this paper concentrates on hyperbolic PDEs.

Our below definition of weakly discontinuity is inspired by Bhamra's definition [1, P. 156] of weak solution.
\begin{definition}
Let $\Gamma$ be a smooth curve on $xOt$ plane and $D$ be a smooth solution area containing $\Gamma$. Then, $\Gamma$ is the weakly discontinuous curve of equation
\begin{equation}
au_{xx}+2bu_{xt}+cu_{tt}+du_x+eu_t+gu=f
\end{equation}
where the coefficients are real numbers, if 
\begin{itemize}
\item[(1)] $u\in C^{1}(D)\cap C^{2}(D\backslash\Gamma)$;
\item[(2)] $u$ satisfies Eq.(1.1);
\item[(3)] the second partial derivatives of $u$ have discontinuity of first kind on the curve $\Gamma$.
\end{itemize}
\end{definition}

Applying wavelet singularity detection theories and WTMM, we can change the discontinuities into the local modulus maxima. Such technique follows a new way to prove the weak form of below well-known theorem in Cauchy problem,
\begin{theorem}[7, P. 133]
Let $\Gamma: x = x(t)$ be the weakly discontinuous curve of Eq.(1.1). Suppose the coefficients in
Eq.(1.1) satisfy 
\begin{equation}
b^2-ac >0, c>0, \forall (x,t)\in D,
\end{equation}
then $x = x(t)$ must satisfy
\begin{equation}
\frac{dx}{dt}=\lambda_1(x,t) \hspace{0.1in}\mbox{or}\hspace{0.1in} \frac{dx}{dt}=\lambda_2(x,t),
\end{equation}
where $\lambda_1, \lambda_2$ are the real roots to the equation
\begin{equation}
c \lambda^2 - 2b\lambda + a =0.
\end{equation}
\end{theorem}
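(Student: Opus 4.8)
The plan is to prove the statement by a classical jump-analysis (Hadamard compatibility) argument that converts the regularity hypotheses of Definition~1 directly into an algebraic constraint on the slope $dx/dt$ of $\Gamma$. First I would parametrize $\Gamma$ as $x=x(t)$ and write $\mu:=dx/dt$. For any quantity $w$ that admits one-sided limits from each side of $\Gamma$, let $[w]$ denote its jump across $\Gamma$ (the limit from the side $x>x(t)$ minus the limit from $x<x(t)$). Since $u\in C^1(D)$, the first derivatives $u_x$ and $u_t$ are continuous across $\Gamma$, so $[u_x]=[u_t]=0$; all of the content therefore lives in the three second-order jumps $p:=[u_{xx}]$, $q:=[u_{xt}]$, $r:=[u_{tt}]$, which are well defined and finite because $u\in C^2(D\setminus\Gamma)$ with discontinuities of the first kind.

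Next I would derive the two \emph{kinematic compatibility relations}. Differentiating the continuous restriction $u_x(x(t),t)$ tangentially along $\Gamma$ and comparing the one-sided derivatives yields $[u_{xx}]\,\mu+[u_{xt}]=0$, i.e. $q=-\mu p$; the same computation for $u_t$ gives $[u_{xt}]\,\mu+[u_{tt}]=0$, i.e. $r=-\mu q=\mu^2 p$. Thus the three second-order jumps are forced to be proportional, with $(p,q,r)$ parallel to $(1,-\mu,\mu^2)$.

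Then I would take the jump of the equation itself. Because $u$ satisfies Eq.(1.1) on $D\setminus\Gamma$ from both sides, the coefficients are constants, and the lower-order terms $du_x+eu_t+gu$ (together with $f$) are continuous across $\Gamma$, subtracting the two one-sided limits annihilates everything except the principal part:
\begin{equation}
a\,p+2b\,q+c\,r=0.
\end{equation}
Substituting the compatibility relations gives $p\,(c\mu^2-2b\mu+a)=0$. Condition (3) guarantees $(p,q,r)\neq(0,0,0)$, and by the proportionality above this forces $p\neq0$, so $c\mu^2-2b\mu+a=0$. Hence $\mu=dx/dt$ is a root of Eq.(1.4); the hypotheses $b^2-ac>0$ and $c>0$ ensure this quadratic is genuinely second order with two distinct real branches $\lambda_1,\lambda_2$, yielding exactly (1.3).

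I expect the main obstacle to be the rigorous justification of the kinematic relations rather than the closing algebra: one must argue carefully that a $C^1$ function whose normal second derivative jumps still has a continuous tangential derivative along $\Gamma$, and that the parametrization $x=x(t)$ (a non-vertical tangent) is legitimate or can be arranged by reparametrizing $\Gamma$. In the wavelet framework this paper develops, the same obstacle reappears in a different guise: the jumps $p,q,r$ must be read off as wavelet transform modulus maxima, the proportionality $(p,q,r)\parallel(1,-\mu,\mu^2)$ must be recovered from the orientation of the maxima lines (the content of Proposition~3), and the Lipschitz index must be invoked to certify that the detected singularity is genuine (that $p\neq0$) rather than a smoothing artifact.
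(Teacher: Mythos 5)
Your proof is correct, but it is a genuinely different route from the paper's. In fact the paper never proves Theorem 1 in the stated generality at all: it cites it as known from [7] and instead gives a wavelet-based proof of only its \emph{weak form} (Theorem 2), i.e.\ the Cauchy problem for the constant-coefficient wave equation $u_{tt}-\nu^{2}u_{xx}=0$, in Section 3. What you have written is the classical Hadamard jump-compatibility argument, presumably the textbook proof behind the citation. The two arguments are structurally parallel in an instructive way: your linear system
\begin{equation*}
ap+2bq+cr=0,\qquad \mu p+q=0,\qquad \mu q+r=0,
\end{equation*}
in the jumps $(p,q,r)=\bigl([u_{xx}],[u_{xt}],[u_{tt}]\bigr)$, is mirrored (in the special case $b=0$) by the paper's system (3.12), where the wavelet-transformed second derivatives $WT_{tt}u_{xx}$, $WT_{tt}u_{xt}$, $WT_{tt}u_{tt}$ play the role of your jumps, the vanishing $t$-derivatives of the wavelet modulus maxima at $t=t_{0}$ replace your tangential differentiation of the continuous first derivatives, and the wavelet transform of the PDE (Eq.\ (3.11)) replaces your jump of the equation; in both cases the conclusion comes from forcing a $3\times 3$ determinant to vanish, and that determinant is exactly the characteristic polynomial. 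Your approach buys full generality (arbitrary constant coefficients including $b\neq 0$ and the lower-order terms $d,e,g$, which drop out precisely because $[u]=[u_x]=[u_t]=0$), elementary self-contained rigor, and a clean source of nondegeneracy: hypothesis (3) together with the proportionality $(p,q,r)\parallel(1,-\mu,\mu^{2})$ forces $p\neq 0$, so no appeal to a claim like the paper's ``Eq.\ (3.12) has nonzero solution'' is needed. The paper's approach buys a method rather than a shorter proof: it converts locating the weak discontinuity into a signal-detection problem for local modulus maxima of the smoothed solution, which is the actual subject of the paper (Proposition 3 and the Lipschitz-index criterion of Section 4); and, as you anticipate in your closing paragraph, the delicate points of your argument (legitimacy of the tangential differentiation, genuineness of the jump) reappear there as the orientation of the maxima lines and the Lipschitz-index certification.
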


To restrict the field of our study, we may assume the discontinuity of first kind is jump discontinuity (shock). Then $u(x,t)$ has a jump discontinuity (shock) along $\Gamma$, which means $u(x,t)$ is continuously differentiable in two parts $D_1$ and $D_2$ of the domain $D$, but with a jump discontinuity (shock) along the dividing smooth curve $\Gamma$. So the limits $u_{x}^{-}(t)=u_{x}(x^{-},t)=u_x(x(t)-0,t-0)$ and also $u_{x}^{+}(t)=u_{x}(x^{+},t)=u_x(x(t)+0,t+0)$ exist. Figure 1 illustrates this kind of condition.
\begin{figure}[h!]
         \centering
         \includegraphics[width=5.5cm,height=4.2cm]{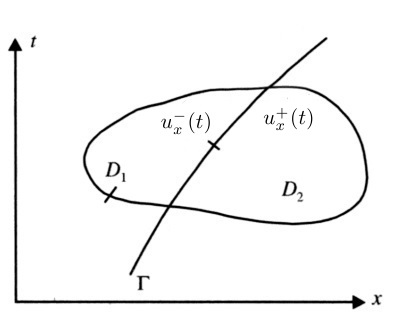}
         \caption{Smooth curve $\Gamma$ cross domain $D$ has jump discontinuities}
         \label{fig:1}
\end{figure}

Let $b=d=e=g=0$ in Eq.(1.1), then
\begin{equation}
au_{xx}+cu_{tt}=0.
\end{equation}

Consider Cauchy problem (or Cauchy initial condition and boundary condition), we have
\begin{equation}
\begin{cases}
u_{tt}-\nu^2u_{xx}=0,\\
u(x,0)=\phi(x),\\
u_t(x,0)=\psi(x).
\end{cases}
\end{equation}
where $\nu^2=-a/c$, $\phi(x) \in C^2, \psi(x) \in C^1$. Then the weak form of Theorem 1 is 
\begin{theorem}
For Eq. (1.6), the conclusion in Theorem 1 holds.
\end{theorem}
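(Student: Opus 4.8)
The plan is to combine d'Alembert's explicit representation of the solution with the wavelet singularity-detection machinery of Sections 2 and 3, so that the geometric locus traced out by the wavelet transform modulus maxima (WTMM) can be identified with a characteristic line. First I would write the Cauchy problem (1.6) in d'Alembert form,
\[
u(x,t)=\tfrac12\bigl[\phi(x+\nu t)+\phi(x-\nu t)\bigr]+\frac{1}{2\nu}\int_{x-\nu t}^{x+\nu t}\psi(\xi)\,d\xi,
\]
and read off the characteristic equation. Comparing (1.6) with (1.1) gives $a=-\nu^{2}$, $b=0$, $c=1$, so (1.4) becomes $\lambda^{2}-\nu^{2}=0$ with roots $\lambda_{1}=\nu$ and $\lambda_{2}=-\nu$; the hyperbolicity hypothesis (1.2) holds since $b^{2}-ac=\nu^{2}>0$ and $c=1>0$. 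The target conclusion (1.3) is then exactly the assertion that $\Gamma$ has slope $dx/dt=\pm\nu$, i.e. that $\Gamma$ coincides with one of the two characteristics $x\pm\nu t=\mathrm{const}$.

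Next I would locate the singular support of $u$. Since $\phi$ and $\psi$ carry at most isolated weak singularities (a jump of $\phi''$ or of $\psi'$ at some point $x_{0}$), the representation shows that $u$ is $C^{2}$ wherever the arguments $x\pm\nu t$ avoid $x_{0}$, and that the only curves across which the second derivatives $u_{xx},u_{xt},u_{tt}$ can have a jump of first kind are $x+\nu t=x_{0}$ and $x-\nu t=x_{0}$. Hence the weak-discontinuity set of $u$ is forced by the explicit solution to lie on characteristics, and any weakly discontinuous curve $\Gamma$ in the sense of Definition 1 must coincide with one of them. The purpose of the wavelet step is to make this localization intrinsic — recoverable from $u$ alone, without differentiating across $\Gamma$ — and to pin down the slope.

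I would then apply the two-dimensional wavelet transform of Section 2 to $u$ regarded as a function on the $xOt$ plane. Using the Lipschitz-index characterization, a weak discontinuity corresponds to Lipschitz exponent $\alpha=2$ transverse to $\Gamma$ while $u$ is smoother off $\Gamma$; consequently, as the scale $s\to0$, the WTMM lines converge onto $\Gamma$ and nowhere else. By Proposition 3 the wavelet gradient at such maxima is normal to the singularity, so the modulus-maxima ridge is tangent to $\Gamma$ and its orientation returns the tangent direction of $\Gamma$. Combining this with the d'Alembert localization, the ridge lies along $x\pm\nu t=\mathrm{const}$, whose tangent slope is $dx/dt=\pm\nu=\lambda_{1}$ or $\lambda_{2}$, which is precisely (1.3).

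The main obstacle I expect is this last matching step: rigorously showing that the orientation returned by the WTMM equals the analytic tangent of $\Gamma$, and that this orientation is stable in the fine-scale limit $s\to0$. This requires verifying that the weak singularity is genuinely one-dimensional (anisotropic) — the jump occurs across $\Gamma$, not along it — so that the modulus is maximized in the normal direction and the ridge direction is unambiguous; the analyzing wavelet must also possess enough vanishing moments to resolve an $\alpha=2$ singularity rather than annihilate it. A secondary difficulty is excluding spurious maxima lines that fail to persist as $s\to0$, i.e. distinguishing a genuine weak discontinuity from a "fake" one, which is exactly the matter taken up in Section 4.
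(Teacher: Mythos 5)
Your proof is correct in its essentials, but the load-bearing argument is genuinely different from the paper's. Your d'Alembert step --- differentiating the representation formula twice to see that $u_{xx}$, $u_{xt}$, $u_{tt}$ can only jump where $x+\nu t$ or $x-\nu t$ meets a singularity of $\phi''$ or $\psi'$, so that the singular support of $u$ is a union of characteristic lines and any smooth weakly discontinuous curve $\Gamma$ must run along one of them --- already yields $dx/dt=\pm\nu$ by itself, and it handles both of the paper's cases at once. The paper argues this way only when $\Gamma$ meets the $x$-axis (its Case 1: one-dimensional WTMM applied to $\phi''$ at $x(0)$, then d'Alembert to propagate the maxima along $x\pm\nu t=x(0)$). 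When $\Gamma$ does not meet the $x$-axis (Case 2), the paper's argument is of an entirely different kind and never uses the solution formula: it differentiates $(u_x)_t=u_{xx}x'(t)+u_{xt}$ and $(u_t)_t$ along $\Gamma$, uses the vanishing of the $t$-derivatives of the wavelet-transform moduli at the maximum, and combines these with the wavelet transform of $au_{xx}+cu_{tt}=0$ to obtain the homogeneous $3\times 3$ system (3.12) in $WT_{tt}u_{xx}$, $WT_{tt}u_{xt}$, $WT_{tt}u_{tt}$; nontriviality of the solution forces the determinant $a+c\,x'(t)^2$ to vanish, i.e.\ $x'(t)=\pm\nu$. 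That is the wavelet-transform version of the classical kinematic-compatibility (jump-condition) proof, and, unlike your route, it does not depend on explicit solvability --- which is why the paper can assert the extension to Eq.\ (3.13) with $b\neq 0$. Your route buys a shorter and more rigorous proof, but only because (1.6) admits the d'Alembert formula. (Both you and the paper tacitly relax the stated hypothesis $\phi\in C^2$, $\psi\in C^1$, under which no weakly discontinuous curve could exist at all.)

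Two caveats on your wavelet paragraphs, neither fatal since your d'Alembert step already closes the proof. First, you invoke Proposition 3 to orient the modulus-maxima ridge, but in the paper Proposition 3 is \emph{deduced from} Theorem 2 (its proof begins ``From the proof of Theorem 2\dots''), so citing it inside a proof of Theorem 2 is circular; you would need to appeal instead to the Canny/Mallat--Hwang gradient property (3.3)--(3.4) directly. Second, you apply the transform to $u$ itself, which across $\Gamma$ has pointwise Lipschitz regularity $\alpha=2$; as you yourself observe, the Gaussian-derivative wavelets of Section 2 have only one or two vanishing moments and would essentially annihilate such a singularity. The paper sidesteps exactly this by transforming the \emph{second derivatives} of the solution (a jump, $\alpha=0$, becoming $\alpha=-1$ after convolution with $\theta'$, as computed in Section 4) rather than $u$, and your argument should be restated at that level to be consistent with the machinery it borrows.
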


\section{Notations in Two-dimensional Wavelet Transform}
Before proving Theorem 2, we shall introduce the concept of two dimensional wavelet transform. First, let the unknown solution $u(x,t)\in L^2(\mathbb{R}^2)$ to Eq.(1.1) be a two dimensional signal, where $x$ and $t$ are displacement-coordinate and time-coordinate respectively. One can use the functions which have short support or rapid reduction ones to detect the locations of singular points of the solution $u(x,t)$. For convenience, we take Gaussian distribution function
\begin{equation}
\theta(x)=\frac{1}{\sqrt{2\pi}}\exp(\frac{-x^2}{2}).
\end{equation}
Since $u(x,t)$ can be understood as a two-dimensional signal, it would be necessary to employ the two-dimensional wavelet transform. That means one should extend one-dimensional Gaussian distribution function to two-dimensional Gaussian distribution function as 
\begin{equation}
\varPsi_{\sigma}(x,t)=\frac{1}{2\pi\sigma}\exp(-\frac{x^2+t^2}{2\sigma^2}),
\end{equation}
where $\sigma$ is a scale parameter.

Then denote $\varPsi_{\sigma;b_1, b_2}(x,t)$ as a function generated by $\varPsi_{\sigma}(x,t)$ with two-dimensional displacement,
\begin{equation}
\varPsi_{\sigma;b_1, b_2}(x,t)=\frac{1}{\sigma}\varPsi(\frac{x-b_1}{\sigma},\frac{t-b_2}{\sigma}).
\end{equation}
Apply the convolution integral to $u(x,t)$ and $\varPsi_{\sigma;b_1, b_2}(x,t)$,
\begin{equation}
\begin{split}
WTu(\sigma;b_1,b_2)&=\big\langle u(x,t),\varPsi_{\sigma;b_1, b_2}(x,t)\big\rangle\\
&=\frac{1}{\sigma}\int \!\!\! \int_D u(x,t)\varPsi(\frac{x-b_1}{\sigma},\frac{t-b_2}{\sigma})\,dx\,dt.
\end{split}
\end{equation}

Let $r:=(x,t)^{T}$ and $b:=(b_1,b_2)^{T}$, $r,b \in \mathbb{R}^2$. The smooth operator can be expressed by
\begin{equation}
P_{\sigma}\varPsi(r):=\frac{1}{\sigma^2}\varPsi(\frac{r}{\sigma}).
\end{equation}
Then, the convolution integral in Eq.(2.4) can be represented by
\begin{equation}
WTu(\sigma;b_1,b_2)=(P_{\sigma}\varPsi*u)(b).
\end{equation}
The muti-scale differential operator of the wavelet transform can be expressed by
\begin{equation}
WT^{(m)}u(\sigma;b_1,b_2)=(P_{\sigma}\varPsi^{(m)}*u)(b).
\end{equation}
\begin{remark}
In this paper, the first and second derivatives are sufficient to deal with the problem. Hence, we ignore the higher derivatives.
\end{remark}

Let 
\begin{equation}
\begin{split}
\varPsi_{x}^{(m)}=\frac{\partial^{m}}{\partial x^m}\varPsi_{\sigma;b_1,b_2}(x,t);\\
\varPsi_{t}^{(m)}=\frac{\partial^{m}}{\partial t^m}\varPsi_{\sigma;b_1,b_2}(x,t).
\end{split}
\end{equation}
where $m=1, 2$.

Denote 
\begin{equation}
\nabla\varPsi=\nabla\varPsi_{\sigma;b_1,b_2}(r)=
\left( \begin{array}{c}
\frac{\partial}{\partial x}\varPsi_{\sigma;b_1,b_2}(x,t)\\[0.1in]
\frac{\partial}{\partial t}\varPsi_{\sigma;b_1,b_2}(x,t)\end{array} \right),
\end{equation}
where $\nabla=(\frac{\partial}{\partial x},\frac{\partial}{\partial t})$.

\begin{remark}
Without the influence of the noise (wavelet transform is often influenced by noise at small scale and may produce false extreme points), the corresponding relationship between the local maxima or zeros of the wavelet transform modulus and singularity for $u(x,t)$ (signal) is very accurate. The smaller the scale parameter $\sigma$ is, the more accurate the corresponding relationship will be. Usually, we choose $\sigma = 2^j$ [6] The details will be discussed in section 4.
\end{remark}
\section{Proof of Theorem 2}
The proof of Theorem 2 can be divided into 2 cases---curve $\Gamma$ does intersect with $x$-axis and does not intersect with $x$-axis.
\begin{proof}[Proof of Case 1]
Without loss of generality, select a small neighborhood of discontinuous point $p_0=\big(x(t_0),t_0\big)\in \Gamma$.

If the curve $\Gamma$ intersects with $x$-axis at point $p_0=(x(0), 0)\in \Gamma$, then we can consider $u(x,0)=\phi(x)$ and get the modulus by using one-dimensional wavelet transform
\begin{equation}
|\theta'*\phi''|,
\end{equation}
where $|\cdot|$ is modulus.

Since $\phi''$ is discontinuous at $x_0$, 
then by WTMM, at $x=x(0)$, 
\begin{equation}
|\theta'*\phi''|\big|_{x=x(0)} 
\end{equation}
is maximum modulus in the small neighorhood of $p_0$. Figure 2 gives an example of 1D WTMM which may present a visualized way to help reader understand it well.
\begin{figure}[h!]
         \centering
         \includegraphics[width=7cm,height=5cm]{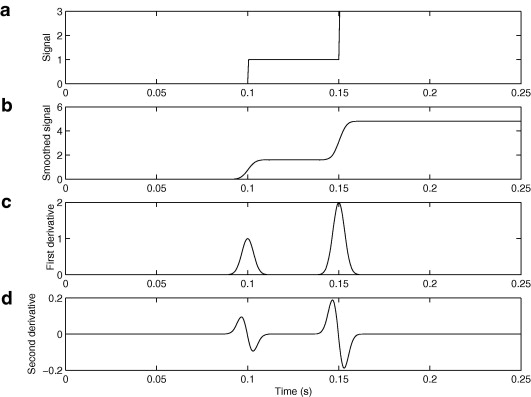}
         \caption{[2] Here's the process of WTMM for 1D signal. \textbf{a)} is the signal with jump discontinuities at $x=0.1$ and $x=0.15$;  \textbf{b)} discontinuities are smoothed when convoluted with $\theta$; \textbf{c)} discontinuities are local maxima when convoluted with $\theta'$; \textbf{d)} discontinuities are zeros when convoluted with $\theta''$.}
         \label{fig:2}
\end{figure}

Finally, apply d'Alembert formula, it is clear that the modulus maximum will spread along the characteristics: $x\pm \nu t = x(0)$. 
\end{proof}

\begin{proof}[Proof of Case 2]
If the curve $\Gamma$ has no intersection with $x$-axis, then we pick an arbitrary point $p_0=\big(x(t_0),t_0\big) \in \Gamma$. By hypothesis, the solution $u(p_0)=u\big(x(t_0),t_0\big)$ has second derivative jump discontinuity (shock). For the two-dimensional WTMM, inspired by Canny's [4] edge detector, Mallat and Hwang [6] showed that the components of the wavelet transform are proportional to the coordinates of the gradient vector of $u(x,t)$ smoothed by $\varPsi_{\sigma;b_1,b_2}(x,t)$; in other word,
\begin{equation}
\left( \begin{array}{c}
WT_{x}u(\sigma;b_1,b_2)\\[0.1in]
WT_{t}u(\sigma;b_1,b_2)\end{array} \right)
=\sigma\left( \begin{array}{c}
\frac{\partial}{\partial x}(P_{\sigma}\varPsi*u)\\[0.1in]
\frac{\partial}{\partial t}(P_{\sigma}\varPsi*u)\end{array} \right)
=\sigma \nabla (P_{\sigma}\varPsi*u).
\end{equation}
Canny [3] pointed out that the edge points where the modulus of the gradient vector of $P_{\sigma}\varPsi*u$ is maximum 
in the direction where the gradient vector points too. The
orientation of the gradient vector indicates the direction
where partial derivative of $u(x,t)$ has an absolute value
which is maximum. Mallat and Hwang [6] showed that Eq.(2.14) implies
an angle between the horizontal and gradient vector, that is
\begin{equation}
Au(\sigma;b_1,b_2 )=\arctan \left(\frac{WT_{t}u(\sigma;b_1,b_2)}{WT_{x}u(\sigma;b_1,b_2)}\right)
\end{equation}
Furthermore, the locally modulus maximum of $u(x,t)$ is along the gradient direction with the angle $Au(\sigma;b_1,b_2 )$ if $\sigma = 2^j$.  

In our proof, for $u_x$, let $(u_x)_t$ represent the partial derivative of $u_x$ along $\Gamma$ respect to $t$. Let $M$ be the modulus maximum operator of two-dimensional WTMM. Consider a small neighborhood of point $p_0=(x(t_0),t_0)\in \Gamma$, we have 
 \begin{equation}
M(u_x)_{t}=\sqrt{|WT_{x}(u_x)_{t}|^2+|WT_{t}(u_x)_{t}|^2},
\end{equation}
is modulus maximum in the small neigborhood of $p_0$. 

Then, when $t=t_0$, the partial derivative of $M(u_x)_{t}$ respect to $t$ is
\begin{equation}
\frac{\partial}{\partial t}(M(u_x)_{t})|_{t=t_0}=0.
\end{equation}
Similarly, at $t=t_0$, we have
\begin{equation}
M(u_t)_{t}=\sqrt{|WT_{x}(u_t)_{t}|^2+|WT_{t}(u_t)_{t}|^2},
\end{equation}
is modulus maxima in the small neigborhood of $p_0$. And
\begin{equation}
\frac{\partial}{\partial t}(M(u_t)_{t})|_{t=t_0}=0.
\end{equation}

Note that direction of WTMM is same with the gradiant vector, and $x$ is a function of $t$. Without loss of generality, we may assume $u_{x}^{-}(t_0)<u_{x}^{+}(t_0)$. Since there exists WTMM at $t=t_0$, we have
\begin{equation}
\frac{\partial}{\partial t}(WT_t(u_x)_{t})|_{t=t_0}=WT_{tt}(u_x)_{t}|_{t=t_0}=0.
\end{equation}
Consider the derivatives respect to $t$ along the curve $\Gamma$. (This direction is the WTMM orientation, since $x$ is a function of $t$). Then we have $(u_x)_t=u_{xx}x'(t)+u_{xt}$ and $(u_t)_t=(u_x)_{t}x'(t)+u_{tt}$. It implies the following systems,
\begin{equation}
\begin{cases}
x'(t_0)(WT_{tt}u_{xx})|_{t=t_0}+(WT_{tt}u_{xt})|_{t=t_0} =0,\\
x'(t_0)(WT_{tt}u_{xt})|_{t=t_0}+(WT_{tt}u_{tt})|_{t=t_0} =0.
\end{cases}
\end{equation}
In words,  as $t\rightarrow t_0$, the wavelet transform values of $(u_x)_t$ and $(u_t)_t$ on $D_1$ and $D_2$ are approaching to the values at $t=t_0$ on $\Gamma$. 

Next, use the convolution integral and wavelet transform on each side of Eq.(1.5), we have
\begin{equation}
a(WT_{tt}u_{xx})|_{t=t_0}+c(WT_{tt}u_{tt})|_{t=t_0}=0.
\end{equation}
By Eq.(3.10)-(3.11), we have
\begin{equation}
\begin{cases}
a(WT_{tt}u_{xx})|_{t=t_0}+c(WT_{tt}u_{tt})|_{t=t_0}=0,\\
x'(t_0)(WT_{tt}u_{xx})|_{t=t_0}+(WT_{tt}u_{xt})|_{t=t_0} =0,\\
x'(t_0)(WT_{tt}u_{xt})|_{t=t_0}+(WT_{tt}u_{tt})|_{t=t_0} =0.
\end{cases}
\end{equation}
Eq.(3.12) has nonzero solution, for the direction of derivatives of $WT_{t}u_{xx}$ and $WT_{t}u_{tt}$ respect to $t$ may not be always along $\Gamma$. Thus, determinant of Eq.(3.12) is equal to zero. By the arbitrary of $t_0$, we have $x'(t)=\nu $ or $x'(t)=-\nu$.
\end{proof}

Furthermore, one can also show that for the equation
\begin{equation}
au_{xx}+2bu_{xt}+cu_{tt}=0, \hspace{0.1in} (b \neq 0),
\end{equation}
the weakly discontinuity will also spread along the characteristics.

According to the above discussion, we have a conclusion for one-dimensional homogeneous hyperbolic PDE with Cauchy problem under the conditions mentioned in section 1, that is 
\begin{proposition}
Suppose there exists a weakly discontinuous curve $\Gamma: x=x(t)$ on the solutions, then the orientation of two-dimensional wavelet transform modulus maxima are along the characteristics.
\end{proposition}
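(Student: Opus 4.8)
The plan is to obtain Proposition~3 as the structural summary of the two cases settled in the proof of Theorem~2, now phrased intrinsically in terms of the orientation angle $Au(\sigma;b_1,b_2)$ of the two-dimensional transform. First I would fix the geometric content: by Definition~1 the curve $\Gamma$ is exactly the singular support of the second derivatives of $u$, while $u$ itself remains $C^1$ across $\Gamma$. Feeding the singular fields $(u_x)_t$ and $(u_t)_t$ into the machinery of Section~2 and invoking the Mallat--Hwang correspondence (Remark~2), the moduli $M(u_x)_t$ of Eq.(3.5) and $M(u_t)_t$ of Eq.(3.7) attain local maxima precisely on this singular support, so that as $\sigma=2^j\to 0$ the chain of two-dimensional WTMM points traces out $\Gamma$ itself, with its orientation read off from the smoothed gradient through the angle $Au$ of Eq.(3.4).

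Next I would make the maximality condition analytic. The defining property of a modulus maximum at $p_0=(x(t_0),t_0)$ is the vanishing of the derivative along $\Gamma$, namely Eqs.(3.6) and (3.8); differentiating once more yields the second-order identity (3.9). Writing the chain rule $(u_x)_t=u_{xx}x'(t)+u_{xt}$ and $(u_t)_t=(u_x)_t x'(t)+u_{tt}$ and transforming produces the homogeneous pair (3.10) with coefficient row $(x'(t_0),1)$ in each block. Adjoining the wavelet transform of Eq.(1.5), i.e. (3.11), assembles the $3\times 3$ homogeneous system (3.12) in the unknowns $WT_{tt}u_{xx}$, $WT_{tt}u_{xt}$, $WT_{tt}u_{tt}$.

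The decisive algebraic step is then to argue that this triple does not vanish identically at $t_0$---equivalently, that the detected singularity is a genuine jump and not a small-scale artifact---so that (3.12) has a nontrivial solution and its determinant must vanish. Expanding the determinant gives $c\,x'(t_0)^2+a=0$, i.e. $x'(t_0)=\pm\nu$ with $\nu^2=-a/c$, which is exactly the characteristic ODE (1.3)--(1.4). Since $t_0\in\Gamma$ is arbitrary, $\Gamma$ is an integral curve of $dx/dt=\lambda_{1,2}$, and repeating the determinant computation for the transformed form of Eq.(3.13) disposes of the mixed-derivative case $b\neq 0$. Combining this identification with the first paragraph---the WTMM locus equals $\Gamma$, and $\Gamma$ equals a characteristic---yields the orientation assertion of Proposition~3.

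The main obstacle I anticipate is justifying the nonvanishing used to set the determinant of (3.12) to zero. Starting from the hypothesis $u_x^-(t_0)<u_x^+(t_0)$, one must show that the second-derivative jump is nonzero, so that the smoothed gradient of the singular field does not degenerate and the homogeneous system is genuinely singular rather than trivially satisfied. This is where the localization theory of WTMM, sharpened by the Lipschitz index promised in the abstract, becomes essential: I would need the modulus maxima to concentrate on $\Gamma$ uniformly in $t_0$ as $\sigma\to 0$, and the orientation $Au$ to converge to the tangent of $\Gamma$, so that the finite-scale transform identities (3.6)--(3.9) pass cleanly to the singular limit. Controlling this limit, and excluding the false extrema flagged in Remark~2, is the quantitatively delicate part of the argument.
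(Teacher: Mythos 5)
Your proposal is correct and takes essentially the same route as the paper: the paper's own proof of Proposition~3 is a two-line sketch combining the observation (from the proof of Theorem~2) that the wavelet transform modulus maxima appear on the weak discontinuities with Theorem~1's conclusion that those discontinuities lie along the characteristics, which is precisely the two-step structure you describe --- you simply inline the determinant computation from Theorem~2's proof instead of citing it. Your closing concern about justifying the nontriviality of the homogeneous system (3.12) is a fair criticism of the underlying argument, but it pertains to the proof of Theorem~2 itself rather than to any divergence between your approach and the paper's.
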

\begin{proof}[Sketch of proof]
From the proof of Theorem 2, we can see that wavelet transform modulus maxima appear on the weakly discontinuities. Apply Theorem 1, discontinuous solutions are spreading along the characteristics. That means wavelet transform modulus maxima are along the characteristics. 
\end{proof}

\section{Lipschitz Index Criterion}
We cannot assure at all the points where signal changes rapidly are the discontinuities. So it is necessary to distinguish them by using Lipshitz index. For example, if $\Gamma$ intersects with $x$-axis at point $x_0$, the discontinuous points on $\Gamma$ are jump discontinuity (shock),
\begin{equation}
\lim_{x\rightarrow x_{0}^{-}}\phi''(x) \neq \lim_{x\rightarrow x_{0}^{+}}\phi''(x).
\end{equation}
Approximate $\phi''(x)$ at point $x_0$ by a step function, then in a small neighborhood of discontinuous point $x_0$, we have
\begin{equation}
|\phi''(x)-\phi''(x_0)|=\mathcal{O}(|x-x_0|^0).
\end{equation}
Lipschitz index $\alpha=0$.

Then, for $|\theta'*\phi''(x)-\theta'*\phi''(x_0)|$, we have
\begin{equation}
\begin{split}
&|\theta'*\phi''(x)-\theta'*\phi''(x_0)|\\
&=\frac{1}{\sqrt{2\pi}}\bigg|\int_{0}^{x}\exp(\frac{-\tau^2}{2})\phi''(x-\tau)d\tau
-\int_{0}^{x_0}\exp(\frac{-\tau^2}{2})\phi''(x_0-\tau)d\tau\bigg|\\
&\leq \frac{1}{\sqrt{2\pi}}\sup_{\tau>0} |\phi''(x-\tau)-\phi''(x_0-\tau)|\bigg|\int_{0}^{\infty}\exp(\frac{-\tau^2}{2})d\tau \bigg|\\
&=\frac{1}{2}\sup_{\tau>0}|\phi''(x-\tau)-\phi''(x_0-\tau)|.
\end{split}
\end{equation}
Since the derivative of $\phi''(x_0)$ is a Dirac function, by (3.3), we have
\begin{equation}
|\theta'*\phi''(x)-\theta'*\phi''(x_0)|\leq \mathcal{O}(|x-x_0|^{-1}).
\end{equation}
Lipschitz index $\alpha=-1$.

Hence, the solution of $u(x,t)$ at the discontinuous point $x_0$ will appear “shock” which is quite different from other point in the neighborhood. And the solutions on the characteristics also have the same feature.

Moreover, coefficients of wavelet transform in the interval section $[x_1, x_2]$ satisfy [6]
\begin{equation}
|W_f(\sigma, x)|\leq K \sigma ^\alpha \hspace{0.1in} \mbox{or} \hspace{0.1in} \log|W_f(\sigma, x)|\leq \log K + \alpha 
\log \sigma.
\end{equation}
When $\alpha <0$, maxima of wavelet transform modulus decreases with increasing $\sigma$. Hence, small scale parameter can avoid the confusion of the images of singularity and point which has rapid change. 

If $\Gamma$ does not intersect with $x$-axis, one should consider two-dimensional signal. The details about the two-dimensional WTMM Lipschitz index can be found in [6].

\section{Conclusion}
An approach to prove Theorem 2 by using WTMM is given in this paper. The main idea is to use convolution to smooth the solution containing weakly discontinuous points, and change the problem into finding the modulus maxima. The orientation of WTMM is also given in Proposition 3.

\end{document}